\title{$X$- и $Y$-инварианты дифференциальных операторов с частными производными на плоскости}
\author{Екатерина Шемякова}
\theoremstyle{plain}
\newtheorem{theorem}{Теорема}[section]
\newtheorem{lemma}[theorem]{Лемма}
\newtheorem{corollary}[theorem]{Следствие}
\newtheorem{proposition}[theorem]{Утверждение}
\theoremstyle{definition}
\newtheorem{definition}[theorem]{Определение}
\newtheorem{example}[theorem]{Пример}
\theoremstyle{remark}
\renewenvironment{proof}{\noindent{\it Доказательство}.}{\qed}
\newcommand{\Sym}{\ensuremath \mathrm{Sym}}
\newcommand{\Ker}{\ensuremath \mathop \mathrm{Ker} \nolimits}
\date{}
\begin{document}
\maketitle

\begin{abstract} В работе рассматривается классическая проблема компьютерной алгебры -- символьное решение
дифференциальных уравнений. А именно, широко используемые теоремы Дарбу о 
преобразованиях гиперболических операторов на плоскости с помощью дифференциальных подстановок переносятся в пространство инвариантов.
Вводятся $X$- и $Y$-инварианты для таких операторов как решения некоторых уравнений записанных в терминах инвариантов Лапласа
самого оператора $L$ относительно калибровочных преобразований. Получены явные формулы преобразований множеств $X$- и $Y$-инвариантов 
при преобразованиях Дарбу. 
\end{abstract}

\section{Введение}

Теория преобразований Дарбу для линейных гиперболических уравнений второго порядка на плоскости 
является классикой символьных алгоритмов и имеет многочисленные приложения: для символьных решений  
задач классической дифференциальной геометрии \cite{bianchi,Eis}, теории
интегрируемых систем \cite{fer-lap,NV}, 
для исследования нелинейных уравнений интегрируемых по Дарбу~\cite{2ZhS2001_rus,anderson_kamran97,forsyth,Goursat}.
Получены многочисленные обобщения классической теории Дарбу как для гиперболических задач
(для систем уравнений на плоскости~\cite{Zh-St,St08,Tsarev99,ts:genLaplace05}, 
для уравнений с более чем двумя независимыми переменными~\cite{Dini1901,Dini1902,ts:2007:dini}, 
так и для негиперболических:~\cite{Roux1899,petren,pisati,2ndorderparab_rus}.

В данной работе мы возвращаемся к классическим для теории Дарбу
гиперболическим операторам второго порядка, т.е. операторам вида
\begin{equation} \label{op:L}
L  = D_{x} D_y + a(x,y) D_x + b(x,y) D_y + c(x,y)\ . \
\end{equation}
Пусть $u=u(x,y)$, $L(u)=0$, $M$ -- линейный дифференциальный оператор с частными производными,
и $v(x,y)=Mu$. В общем случае такое $v$ будет удовлетворять переопределенной 
системе линейных дифференциальных уравнений, и лишь
при особом выборе $M$ мы получаем только одно новое уравнение: $L_1 v =0$, где $L_1$ того же вида что и $L$, 
(\ref{op:L}), но с измененными коэффициентами $a_1(x,y)$, $c_1(x,y)$, $b_1\equiv b$. 
В таком случае мы имеем \emph{дифференциальное преобразование}~\cite{2ndorderparab_rus} 
оператора $L$ в $L_1$ с помощью $M$, и для некоторого оператора $M_1$
имеет место равенство
\begin{equation} \label{main}
M_1 \circ L = L_1 \circ M \ ,
\end{equation}
задающее левое наименьшее общее кратное $lLCM(L,M)$ в кольце $K[D]=K[D_x,
D_y]$ линейных дифференциальных операторов на плоскости.

Известно~\cite[Ch.
VIII]{Darboux2}, что в случае общего положения все такие операторы $M$
описываются в терминах одного или двух частных решений уравнения $L(z)=0$. Возможны и вырожденные 
случаи, среди которых и классическое 
преобразование Лапласа, для задания которого требуется лишь знание коэффициентов
оператора (\ref{op:L}). Соотношение (\ref{main}) для
``сплетающего оператора'' $M$ также широко использовалось при
изучении интегрируемых задач~\cite{VSh93,BV00}. 

В данной работе мы переносим классические результаты Дарбу о 
преобразованиях гиперболических операторов на плоскости с помощью дифференциальных подстановок в пространство инвариантов.
Согласно Дарбу любое дифференциальное преобразование может быть представлено в виде последовательности 
дифференциальных преобразований простейшего вида, т.е. таких что $M$ -- оператор первого порядка и 
содержит операторы дифференцирования только по одной переменной (мы называем их $X$- и $Y$-преобразования Дарбу). 
Так как частные решения $z$ уравнения $L(z)=0$ используются для построения дифференциальных
преобразований, мы рассматриваем инварианты $R$ и $Q$,~(\ref{def:RQ}) пар $(L,z)$ вместо обыкновенно рассматриваемых порождающих инвариантов $h$ и $k$
самого оператора $L$ относительно калибровочных (см. определение в секции~\ref{sec:def}) преобразований. Функции $Q$ и $R$ можно построить по $z$,
и обратно, по $R$ и $Q$ можно построить $z$.

Мы показываем, что каждую из функций $R$ и $Q$ можно задать как решение некоторого уравнения ((\ref{eq:r}) и (\ref{eq:q})) с коэффициентами
зависящими только от $h$ и $k$, то есть уравнение одно для каждого класса эквивалентности операторов $L$. 
Решения этих уравнений мы называем $X$- и $Y$-инвариантами. Свойства $X$- и $Y$-инвариантов показывают, что введенные понятия 
интересны и могут быть полезными для развития методов Дарбу и их обобщений.

Автор благодарит профессора С.П.Царева за полезные замечания. 

\section{Основные определения}
\label{sec:def}

Пусть $K$ -- дифференциальном поле $K$ характеристики ноль с коммутирующими дифференцированиями $\partial_x, \partial_y$,
$K[D]=K[D_x, D_y]$ -- кольцо $K[D]=K[D_x, D_y]$ линейных операторов с частными
производными с коэффициентами из $K$, где $D_x, D_y$ соответствуют дифференцированиям $\partial_x, \partial_y$.  

Операторы $L \in K[D]$ имеют вид $L = \sum_{i+j =0}^d a_{ij}
D_x^i D_y^j$, где $a_{ij} \in K$. Многочлен $\Sym_L =  \sum_{i+j =
d} a_{ij} X^i Y^j$ формальных переменных $X, Y$ назовем (главным)
\emph{символом} $L$.

Поле  $K$ будем предполагать дифференциально замкнутым, т.е.\ содержащим решения (нелинейных в общем случае)
дифференциальных уравнений с коэффициентами из $K$.

Пусть $K^*$ -- множество обратимых элементов из $K$. 
Для $L \in K[D]$ и каждого $g \in K^*$ рассмотрим калибровочное~\footnote{в англ.gauge transformation} преобразование
\begin{equation} \label{def:gauge}
 L \rightarrow L^g = g^{-1} \circ L \circ g \ .
\end{equation}
Тогда дифференциально-алгебраическое выражение $I$ от коэффициентов
оператора $L$ и их производных называется \emph{инвариантом}
относительно этих калибровочных преобразований, если оно не меняется
при этих преобразованиях. Простейшие примеры инвариантов --
коэффициенты символа оператора. Порождающим множеством инвариантов
называется набор инвариантов, такой, что любой дифференциальный
инвариант может быть выражен через инварианты этого набора и их
производные.

\section{X- и Y-инварианты}

Известно~\cite{Darboux2}, что функции 
\begin{equation} \label{def:h,k}
h=ab+a_x - c \ , \  k=ab +b_y - c
\end{equation}
образуют порождающее множество дифференциальных инвариантов 
для операторов $L$ вида~(\ref{op:L}) относительно калибровочных преобразований~(\ref{def:gauge}). Соответствующее 
преобразование ядер, $\Ker(L) \rightarrow \Ker(L')$: 
\begin{equation} \label{z->z'}
z \mapsto z'= \frac{z}{g} \ . 
\end{equation}
Функции $h$ и $k$ называют инвариантами Лапласа.

Далее мы будем рассматривать калибровочные преобразования пар $(z, L)$, где $z \in \Ker L$.

\begin{lemma} Функции
\begin{equation} \label{def:RQ}
R(b,z)=R=-b-\frac{z_x}{z} \quad  \text{и} \quad  
Q(a,z)=Q=-a- \frac{z_y}{z} 
\end{equation} 
 являются инвариантами для пар $(z, L)$, $z \neq 0$ относительно калибровочных преобразований. 
\end{lemma}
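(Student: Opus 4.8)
The plan is to track how each ingredient of the expressions (\ref{def:RQ}) changes under a gauge transformation (\ref{def:gauge}) and to observe that all the $g$-dependent terms cancel. First I would compute the conjugate $L^g=g^{-1}\circ L\circ g$ explicitly by expanding $g^{-1}L(gu)$ with the Leibniz rule. For $L$ of the form (\ref{op:L}) this gives
\begin{equation*}
L^g = D_xD_y + \Bigl(a+\frac{g_y}{g}\Bigr)D_x + \Bigl(b+\frac{g_x}{g}\Bigr)D_y + \Bigl(c+\frac{g_{xy}}{g}+a\frac{g_x}{g}+b\frac{g_y}{g}\Bigr),
\end{equation*}
so that under the transformation $a\mapsto a^g=a+g_y/g$ and $b\mapsto b^g=b+g_x/g$; note the ``cross'': the coefficient of $D_x$ is shifted by the $y$-logarithmic derivative of $g$, and vice versa. (As a by-product one can re-derive the invariance of $h$ and $k$ from (\ref{def:h,k}), which is a useful sanity check of the formula for $c^g$.)

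Next, for the companion solution: since $g\cdot z'=z$ we have $L^g(z')=g^{-1}L(z)=0$, so the induced map on kernels is indeed $z\mapsto z'=z/g$ as in (\ref{z->z'}); taking logarithmic derivatives yields $z'_x/z'=z_x/z-g_x/g$ and $z'_y/z'=z_y/z-g_y/g$. It then only remains to substitute:
\begin{equation*}
R(b^g,z')=-b^g-\frac{z'_x}{z'}=-\Bigl(b+\frac{g_x}{g}\Bigr)-\Bigl(\frac{z_x}{z}-\frac{g_x}{g}\Bigr)=-b-\frac{z_x}{z}=R,
\end{equation*}
and symmetrically $Q(a^g,z')=-a^g-z'_y/z'=-(a+g_y/g)-(z_y/z-g_y/g)=-a-z_y/z=Q$, which proves the lemma.

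There is no serious obstacle here; the one point that requires care is precisely the cross-pairing in the coefficient transformation, since pairing $b^g$ with $z'_y/z'$ (or $a^g$ with $z'_x/z'$) would spoil the cancellation. A cleaner, bookkeeping-free variant is to notice that $R=-(D_x+b)z/z$ for $z\in\Ker L$ and that $D_x+b$ is gauge-covariant in the sense $(D_x+b^g)z'=g^{-1}\bigl((D_x+b)z\bigr)$; the factors of $g$ in the numerator and the denominator then cancel at once, giving $R^g=R$, and the argument for $Q$ is identical with $D_y+a$ in place of $D_x+b$.
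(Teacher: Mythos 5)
Your proof is correct and follows essentially the same route as the paper's (one-line) proof: write out the gauge-transformed coefficients $a^g=a+g_y/g$, $b^g=b+g_x/g$ of $L^g$, combine with $z\mapsto z/g$ from (\ref{z->z'}), and observe the cancellation of the logarithmic derivatives of $g$. Your explicit computation, the correct handling of the cross-pairing of coefficients, and the covariance remark about $D_x+b$ are all sound; nothing is missing.
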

\begin{proof} Достаточно выписать формулы преобразований коэффициентов 
операторов $L$ вида~(\ref{op:L}) и использовать~(\ref{z->z'}).
\end{proof}

Функции $R$ и $Q$ обладают примечательным свойством: они удовлетворяют уравнениям, записанным только в терминах $h$ и $k$. Таким образом,
уравнения не содержат $z$.

\begin{proposition}  Пусть $R=r \neq 0, Q=q \neq 0$ получены по формулам~(\ref{def:RQ})
из некоторого $z \in \Ker L, z \neq 0$, тогда верны соотношения
\begin{eqnarray}
h-k - r_y + \left(\frac{k}{r} \right)_x + (\ln r)_{xy}  &=& 0  \ , \label{eq:r} \\
h-k + q_x - \left(\frac{h}{q} \right)_y - (\ln q)_{xy}  &=& 0 \ , \label{eq:q} 
\end{eqnarray}
где $h$ и $k$ -- инварианта Лапласа~(\ref{def:h,k}) оператора $L$.
\end{proposition}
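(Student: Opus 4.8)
The plan is to verify the two identities by direct substitution of the definitions~(\ref{def:RQ}) and~(\ref{def:h,k}), using only that $z\in\Ker L$, i.e.\ $z_{xy}+az_x+bz_y+cz=0$. First I would write $r=-b-z_x/z$ and note the useful consequence $(\ln r)_{xy}$ will produce second derivatives of $z$; so before touching~(\ref{eq:r}) I would record the two first-order relations obtained by differentiating $R$ and $Q$, namely expressions for $z_{xx}/z$, $z_{yy}/z$ and especially $z_{xy}/z$ in terms of $r,q,a,b$ and their derivatives, and then use $L(z)=0$ to express $z_{xy}/z=-a z_x/z-b z_y/z-c=a(r+b)+b(q+a)-c$. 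This gives a clean algebraic handle: every occurrence of a derivative of $z$ can be eliminated in favour of $r$, $q$, $a$, $b$, $c$ and their $x$- and $y$-derivatives.

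Next, for~(\ref{eq:r}) I would compute each term separately. We have $r_y=-b_y-(z_x/z)_y=-b_y-(z_{xy}z-z_xz_y)/z^2=-b_y-z_{xy}/z+(z_x/z)(z_y/z)$, and since $z_x/z=-b-r$, $z_y/z=-a-q$, and $z_{xy}/z$ is known from $L(z)=0$, this is fully explicit in $a,b,c,r,q$. Similarly $(k/r)_x$ expands using $k=ab+b_y-c$ (an invariant of $L$ alone, so only its $x$-derivative of the explicit expression is needed) together with $r_x=-b_x-(z_{xx}z-z_x^2)/z^2$; here $z_{xx}/z$ is obtained by differentiating $z_x/z=-b-r$ in $x$, giving $z_{xx}/z=(z_x/z)^2+\partial_x(z_x/z)=(b+r)^2-b_x-r_x$, which is self-referential but harmless. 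The term $(\ln r)_{xy}=\partial_y(r_x/r)$ likewise unwinds. After substituting everything, the claim is that the sum $h-k-r_y+(k/r)_x+(\ln r)_{xy}$ collapses to zero; I expect massive cancellation, with the $h-k$ term exactly matching the terms coming from $z_{xy}/z$ via $L(z)=0$. The computation for~(\ref{eq:q}) is the mirror image under the formal symmetry $x\leftrightarrow y$, $a\leftrightarrow b$, $h\leftrightarrow k$, $r\leftrightarrow q$ (with a sign flip as visible in the two displayed equations), so once~(\ref{eq:r}) is established I would only indicate that~(\ref{eq:q}) follows by this duality rather than redo it.

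The main obstacle is purely organizational: keeping track of the bookkeeping so that derivatives of $z$ are eliminated \emph{consistently}, in particular making sure that when I differentiate $z_x/z=-(b+r)$ I do not reintroduce $z_{xy}/z$ with the wrong sign or double-count it against the $L(z)=0$ substitution. A clean way to avoid this is to introduce shorthand $p=z_x/z=-(b+r)$ and $s=z_y/z=-(a+q)$, record the three identities $p_y=s_x$ (since both equal $(\ln z)_{xy}$), $p_y=z_{xy}/z-ps$, and $z_{xy}/z=-ap-bs-c$, and then treat the whole problem as an identity in the differential ring generated by $a,b,c,r,q$ modulo these relations. The compatibility condition $p_y=s_x$ is in fact the structural reason the two equations~(\ref{eq:r}),~(\ref{eq:q}) are not independent, and I would note that deriving~(\ref{eq:r}) from $p_y=s_x$ after eliminating $z_{xy}$ is essentially a two-line computation once the substitutions are set up correctly; the only real risk is an arithmetic slip in the expansion of $(\ln r)_{xy}$, which I would guard against by expanding it as $r_{xy}/r-r_xr_y/r^2$ and handling numerator and denominator separately.
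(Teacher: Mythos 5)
Your plan is correct and is exactly what the paper does: its proof consists of the single sentence that the identities are verified by substitution, and your elimination scheme (writing $p=z_x/z=-(b+r)$, $s=z_y/z=-(a+q)$, using $L(z)=0$ for $z_{xy}/z$ and the compatibility $p_y=s_x$) is a sound and well-organized way to carry that substitution out — indeed it yields the intermediate relations $r_y=rq-k$ and $q_x=qr-h$, from which~(\ref{eq:r}) and~(\ref{eq:q}) follow by eliminating $q$ (resp.\ $r$). No gap; you simply supply the bookkeeping the paper omits.
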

\begin{proof}
Равенства проверяются подстановкой.
\end{proof}

\begin{definition} Решения $r$ и $q$ уравнений ~(\ref{eq:r}) и ~(\ref{eq:q}) мы назовем 
$X$- и $Y$-инвариантами соответственно. 
\end{definition}

\begin{lemma}
\label{r_z_1_to_1}
Каждому $X$- (соотв. $Y$-) инварианту $r$ (соотв. $q$) соответствует
только одно (с точностью до умножения на константу) $z$
такое, что $z \in \Ker L$ и $r=-b-z_x/z$ (соотв. $q=-a-z_y/z$). То есть
\begin{eqnarray}
&& z = f(y) e^{- \int b+r \ dx} \label{z:f(y):exp} \\
&& \left( \text{соотв.}  \quad z = g(x) e^{- \int  a+q \ dy} \right) \ , \nonumber  
\end{eqnarray}
где $f(y)$ и $g(x)$ единственны с точностью до константы~\footnote{т.е. $f(y)$ и $g(x)$ здесь не параметры, а определенные функции}. 
\end{lemma}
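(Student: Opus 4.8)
Основная идея состоит в том, чтобы явно проинтегрировать определяющее соотношение для $r$. По определению $X$-инварианта (формула~(\ref{def:RQ})) мы имеем $r = -b - z_x/z$, то есть $z_x/z = -(b+r)$, что равносильно $(\ln z)_x = -(b+r)$. Интегрируя по $x$ (при фиксированном $y$), получаем $\ln z = -\int (b+r)\, dx + C(y)$ для некоторой функции $C(y)$, зависящей только от $y$; экспонируя, приходим к~(\ref{z:f(y):exp}) с $f(y) = e^{C(y)}$. Таким образом, любое $z$, порождающее данный $r$ по формуле $r=-b-z_x/z$, обязано иметь вид~(\ref{z:f(y):exp}), и свобода в выборе $z$ сводится к свободе в выборе функции $f(y)$.

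Следующий (и основной) шаг --- показать, что условие $z \in \Ker L$ фиксирует $f(y)$ однозначно с точностью до мультипликативной константы. Здесь я бы подставил выражение~(\ref{z:f(y):exp}) в уравнение $L(z)=0$, где $L$ имеет вид~(\ref{op:L}). Поскольку $z_x = z\cdot(-(b+r))$, вычисление $z_{xy}$, $z_x$, $z_y$ и подстановка в $D_xD_y z + a z_x + b z_y + c z = 0$ должны привести, после деления на $z$ и использования того, что $r$ удовлетворяет уравнению~(\ref{eq:r}), к обыкновенному дифференциальному уравнению первого порядка на $f(y)$ вида $f'(y)/f(y) = \varphi(y)$ с правой частью, выраженной через коэффициенты и $r$. Такое уравнение имеет единственное решение с точностью до умножения на константу. Симметричное рассуждение (с заменой ролей $x \leftrightarrow y$, $a \leftrightarrow b$, $h \leftrightarrow k$, $r \leftrightarrow q$) даёт случай $Y$-инварианта и формулу для $z = g(x) e^{-\int (a+q)\, dy}$.

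Главным техническим препятствием я ожидаю именно аккуратное проведение этой подстановки: нужно убедиться, что после использования~(\ref{eq:r}) все члены, содержащие производные $r$ по $x$ и смешанные производные, действительно сокращаются, так что остаётся чистое ОДУ по переменной $y$ на $\ln f$, без остаточной зависимости от $x$. Фактически корректность~(\ref{eq:r}) --- это в точности условие совместности (интегрируемости) того, что $z$, построенное по $r$, лежит в ядре $L$; поэтому проверка должна пройти, но требует внимательного обращения с логарифмическими производными. После этого единственность $f(y)$ (соответственно $g(x)$) немедленно следует из теоремы единственности для линейного ОДУ первого порядка над дифференциально замкнутым полем $K$.
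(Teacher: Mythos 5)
Ваш план по существу совпадает с доказательством в статье: там тоже сначала из соотношения $r=-b-z_x/z$ выводится вид $z=f(y)e^{-\int (b+r)\,dx}$ (что даёт единственность), а затем подстановка в $L(z)=0$ с использованием уравнения~(\ref{eq:r}) сводит всё к линейному ОДУ первого порядка $f'(y)+f(y)A(0,y)=0$, где $A=k/r+r_y/r+a$. Отмеченное вами «главное техническое препятствие» --- сокращение зависимости от $x$ --- в статье снимается наблюдением $\int_0^x A_x\,dx=A(x,y)-A(0,y)$, так что ваш подход корректен и идентичен авторскому.
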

\begin{proof}
Пусть $r$ -- $X$-инвариант некоторого оператора $L$, (\ref{op:L}).
Пусть $z = f(y) e^{- \int_{0}^x b+r \ dx}$. Здесь мы для краткости не вводим новую переменную
для интегрирования. Таким образом, $f(y)$ -- выделенная ``неопределенность'' интегрирования по $x$.
Докажем, что можно подобрать $f(y)$ так, что $z \in \Ker L$. 

Выразим коэффициенты $c=c(x,y)$ и $b=b(x,y)$ через инварианты Лапласа $h$
и $k$,~(\ref{def:h,k}) оператора $L$: $c=-h+ab+a_x$ и 
$b= \int k - h + a_x \ dy$, и $a=a(x,y)$. 
В выражении для $b$ содержится неопределенность (произвольная функция от $x$), но это не влияет
дальнейшие вычисления, так в них участвует только $b_y$.

Выразим $h$ из равенства~(\ref{eq:r}).
Подставим эти выражения в выражение для $z$. Тогда равенство $L(z)=0$ имеет вид
\[
 f(y) A + (f(y))_y -f(y) \int_0^x A_x \ dx = 0 \ , 
\]
где
\[
 A = A(x,y) = \frac{k}{r}+\frac{r_y}{r} + a \ .
\]
Так как $\int_0^x A_x \ dx= A(x,y) - A(0,y)$, то имеем
\[
(f(y))_y + f(y) A(0,y) = 0 \ .
\]
Отсюда $f(y)$ находится однозначно с точностью до умножения на константу,
и, как видно, действительно, является функцией зависящей только от переменной
$y$.

Из условия $r=-b-z_x/z$
видим, что любое такое $z$, если существует, имеет вид $z = f(y) e^{- \int_{0}^x b+r \ dx}$, что и
доказывает единственность.

Утверждение для $q$ ($Y$-инварианта оператора $L$) доказывается аналогично.
\end{proof}

Таким образом, доказано важное свойство $X$- и $Y$- инвариантов: 
 множество всех $R$, полученных из $z \in \Ker L$, $z \neq 0$, совпадает с множеством решений $r$ уравнения~(\ref{eq:r}), т.е.
с множеством $X$-инвариантов (аналогично для $Q$, $q$ и $Y$-инвариантов). То есть $X$- и $Y$- инварианты можно рассматривать как ``проективизацию''
пространства решений $L(z)=0$.  

\begin{corollary}[связь между $X$- и $Y$- инвариантами]
Между $X$- и $Y$-инвариантами существует взаимно-однозначное соответствие.

Такие пары $X$- и $Y$- инвариантов будем называть ``соответствующими''. Они удовлетворяют соотношению
\begin{equation} \label{equality:rqhk}
r_y - q_x  = h - k \ ,
\end{equation}
где $h$ и $k$ -- инварианты Лапласа~(\ref{def:h,k}) оператора $L$.
\end{corollary}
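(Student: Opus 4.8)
The plan is to establish the bijection between $X$- and $Y$-invariants by routing everything through the space of kernel elements $z$, using Lemma~\ref{r_z_1_to_1} as the main tool. First I would observe that Lemma~\ref{r_z_1_to_1} already gives, for a fixed operator $L$, two maps: one sending each nonzero $z \in \Ker L$ (modulo scalars) to its $X$-invariant $R(b,z) = -b - z_x/z$, and one sending the same $z$ to its $Y$-invariant $Q(a,z) = -a - z_y/z$. The lemma asserts each of these maps is a bijection onto the set of $X$- (resp. $Y$-) invariants: surjectivity is the content of the displayed formulas~(\ref{z:f(y):exp}) (every solution $r$ of~(\ref{eq:r}) comes from some $z$), and injectivity is the uniqueness-up-to-constant clause. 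Since $R(b,z)$ and $R(b,cz)$ coincide for a constant $c$ (the $z_x/z$ term is scale-invariant), these maps descend to the projectivized kernel $\mathbb{P}(\Ker L)$. Composing the inverse of the first bijection with the second then produces the desired one-to-one correspondence $r \leftrightarrow q$: given an $X$-invariant $r$, recover the unique $z$ via~(\ref{z:f(y):exp}), and set $q := -a - z_y/z$; given a $Y$-invariant $q$, run the analogous construction in reverse.

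Next I would prove the identity~(\ref{equality:rqhk}). For a pair $(r,q)$ arising from the same $z$, we have $r = -b - z_x/z$ and $q = -a - z_y/z$, so
\begin{equation*}
r_y - q_x = -b_y - (z_x/z)_y + a_x + (z_y/z)_x = a_x - b_y - (z_x/z)_y + (z_y/z)_x.
\end{equation*}
The mixed-partial terms cancel since $(z_x/z)_y = z_{xy}/z - z_x z_y/z^2 = (z_y/z)_x$, leaving $r_y - q_x = a_x - b_y$. Comparing with the definitions~(\ref{def:h,k}), $h - k = (ab + a_x - c) - (ab + b_y - c) = a_x - b_y$, which is exactly $r_y - q_x$. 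So the identity is a short direct computation once the correspondence is in place; the substance of the corollary is really the bijection, not the formula.

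The main obstacle is making sure the two constructions in Lemma~\ref{r_z_1_to_1} are genuinely inverse to each other and that the correspondence is well-defined independently of choices. Concretely, one must check that the $z$ recovered from $r$ via the $X$-side formula~(\ref{z:f(y):exp}) is the same $z$ (up to a constant) whose $Y$-invariant is the $q$ we then extract — this is automatic because Lemma~\ref{r_z_1_to_1} characterizes $z$ uniquely from $r$, and $q$ is then a function of that $z$ alone, but it should be stated. One also wants the map to be a bijection on the invariant sets, not merely a function: injectivity of $r \mapsto q$ follows because the $Y$-side of Lemma~\ref{r_z_1_to_1} lets us recover $z$ from $q$, hence $r$ from $z$, so the construction is reversible. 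I would also note in passing the degenerate hypotheses ($r \neq 0$, $q \neq 0$, $z \neq 0$) that the Proposition and Lemma carry, and confine the statement to that regime. With these points addressed, the corollary follows by assembling Lemma~\ref{r_z_1_to_1} with the one-line computation above.
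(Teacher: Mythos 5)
Your proposal is correct and follows essentially the same route as the paper: the correspondence $r \leftrightarrow q$ is obtained by passing through the unique (up to a constant) $z \in \Ker L$ supplied by Lemma~\ref{r_z_1_to_1}, and the identity~(\ref{equality:rqhk}) is derived by cross-differentiating $r=-b-z_x/z$ and $q=-a-z_y/z$ so that the mixed-partial terms cancel, leaving $a_x-b_y=h-k$. Your write-up merely makes explicit the well-definedness and invertibility points that the paper states more tersely.
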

\begin{proof} Пусть $r$ -- $X$-инвариант, тогда по Лемме~\ref{r_z_1_to_1} существует 
только одно (с точностью до умножения на константу) $z \in \Ker L$ такое, что $r=-b-z_x/z$.
Используя это $z$ строим $Y$-инвариант $q=-a-z_y/z$. Отметим, что при умножении $z$ на константу
$q$ не изменится, таким образом он единственный для данного $r$. 

 Перекрестно дифференцируя имеющиеся равенства 
$r=-b-z_x/z$ и $q=-a-z_y/z$, получаем~(\ref{equality:rqhk}).

Точно такое же соотношение получается, если по данному $Y$-инварианту $q$
построить, используя Лемму~\ref{r_z_1_to_1}, $z \in \Ker L$ такое, что $q=-a-z_y/z$,
а по $z$ построить $X$-инвариант $r=-b-z_x/z$. 
\end{proof}

\section{Свойства X- и Y- инвариантов}
\label{sec:last}
Пусть 
\[
\Ker_X(L) = \Ker_X(h,k) \quad (\text{соотв.} \quad \Ker_Y(L) = \Ker_Y(h,k))  
\]
-- множество $X$- (соотв. $Y$-) инвариантов оператора $L$ с порождающими инвариантами~(\ref{def:h,k}).

Пусть $z \in \Ker L, z \neq 0$, $r$ и $q$ -- соответствующие $X$- и $Y$-инварианты, тогда, согласно  
Дарбу~\cite{Darboux2}, для оператора 
\begin{equation} \label{M:X-Darboux}
M=D_x-\frac{z_x}{z}=D_x+r+b 
\end{equation}
существуют единственно определенные операторы $M_1$ и $L_1$ такие, что выполняется~(\ref{main}).
%\begin{equation} \label{main2}
%M_1 \circ L = L_1 \circ M \ .
%\end{equation}
Аналогично, для оператора 
\[
M=D_y-\frac{z_y}{z} = D_y + q + a 
\]
существуют единственно определенные операторы $M_1$ и $L_1$ такие, что выполняется~(\ref{main}).
Такие дифференциальные преобразования назовем $X$- и $Y$-преобразования Дарбу соответственно. 
Такие преобразования дают ${\mathbb R}$-линейное отображение ядра $L$ в ядро $L_1$. 
Оказалось, что это отображение можно перенести и на $X$- и $Y$- инварианты, хотя этот факт заранее не очевиден. 

\begin{theorem}[$\Ker_Z(L)$ под действием $Z$-преобразований Дарбу, $Z \in \{ X,Y \}$]
\label{thm:ker_z}
 Пусть $r_0 \in \Ker_X(L)$ (соотв. $q_0 \in \Ker_Y(L)$), и $L_{1}$ -- оператор 
полученный из оператора $L$ соответствующим $X$- (соотв. $Y$-) преобразованием Дарбу, тогда
$\Ker_X(L) / \{ r_0 \} \rightarrow \Ker_X(L_1)$ 
(соотв. $\Ker_Y(L) / \{ q_0 \} \rightarrow \Ker_Y(L_1)$) по следующей формуле:

\begin{eqnarray} \label{eq:r->r_X_X}
&& r \mapsto r + \left(\frac{r}{r_0}\right)_x \frac{r_0}{r_0-r} \\
\nonumber
&& \left(\text{соотв.} \quad q \mapsto q +  \left(\frac{q}{q_0}\right)_y \frac{q_0}{q_0-q} \right) \ .
\end{eqnarray}
\end{theorem}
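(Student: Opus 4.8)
The plan is to transport the question to the level of kernel elements via Lemma~\ref{r_z_1_to_1}, carry out the Darboux transformation there, and then read off the new $X$-invariant. Fix $r_0 \in \Ker_X(L)$ and let $z_0 \in \Ker L$, $z_0 \neq 0$, be the solution (unique up to a constant factor) with $r_0 = -b - z_{0,x}/z_0$; the corresponding $X$-Darboux transformation is generated by $M = D_x - z_{0,x}/z_0$, cf.~(\ref{M:X-Darboux}), and on kernels it is the map $u \mapsto M(u)$, since $L(u)=0$ implies $L_1(M(u)) = M_1(L(u)) = 0$ by~(\ref{main}). Take now $r \in \Ker_X(L)$ with $r \neq r_0$ and let $z \in \Ker L$, $z \neq 0$, be the solution attached to $r$ by Lemma~\ref{r_z_1_to_1}, so that $z_x/z = -b-r$. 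Using $z_{0,x}/z_0 = -b-r_0$ one gets the key identity
\[
v := M(z) = z\left(\frac{z_x}{z} - \frac{z_{0,x}}{z_0}\right) = z\,(r_0 - r)\,,
\]
a nonzero element of $\Ker L_1$; by the observation following Lemma~\ref{r_z_1_to_1} its $X$-invariant relative to $L_1$ belongs to $\Ker_X(L_1)$, and it is by construction the image of $r$ under the map of the theorem.

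It remains to compute this $X$-invariant $R_1 = -b_1 - v_x/v$, where $b_1$ is the $D_y$-coefficient of $L_1$. From $v = z(r_0-r)$ we immediately get
\[
\frac{v_x}{v} = \frac{z_x}{z} + \frac{(r_0-r)_x}{r_0-r} = -b - r + \frac{r_{0,x} - r_x}{r_0 - r}\,.
\]
To find $b_1$ I would expand~(\ref{main}) for $M = D_x - z_{0,x}/z_0$. Comparison of symbols forces $M_1 = D_x + \nu$ with $\nu \in K$; equating in $M_1 \circ L = L_1 \circ M$ the coefficients of $D_x^2$, of $D_x D_y$ and of $D_y$ gives $a_1 = a$, $\nu = b_1 - b - z_{0,x}/z_0$ and, after eliminating $\nu$,
\[
b_1\left(b + \frac{z_{0,x}}{z_0}\right) = b\left(b + \frac{z_{0,x}}{z_0}\right) - \left(b + \frac{z_{0,x}}{z_0}\right)_x\,,
\]
that is $b_1 = b - (\ln r_0)_x$, since $b + z_{0,x}/z_0 = -r_0$. (If one prefers the presentation of $L_1$ with $b_1 = b$, it suffices to apply the gauge $g$ with $(\ln g)_x = (\ln r_0)_x$; as $R = -b - z_x/z$ is a gauge invariant of the pair $(z,L)$, the value of $R_1$ is not affected by this choice.)

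Substituting,
\[
R_1 = -b_1 - \frac{v_x}{v} = r + \frac{r_{0,x}}{r_0} + \frac{r_x - r_{0,x}}{r_0 - r}\,,
\]
and putting the last two summands over the common denominator $r_0(r_0-r)$ one checks that the right-hand side equals $r + \left(\frac{r}{r_0}\right)_x \frac{r_0}{r_0-r}$, which is precisely~(\ref{eq:r->r_X_X}). The map is well defined: replacing $z$ by a constant multiple multiplies $v$ by the same constant and leaves $R_1$ unchanged; the value $r_0$ must be removed because $M(z_0) = 0$; and $R_1 \in \Ker_X(L_1)$ as noted above. The statement for $Y$-invariants follows by the symmetry $x \leftrightarrow y$, $a \leftrightarrow b$, $h \leftrightarrow k$, $R \leftrightarrow Q$, with $M = D_y - z_{0,y}/z_0$.

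The one step I expect to be a genuine obstacle is pinning down $b_1$, i.e.\ the exact form of the operator $L_1$ delivered by the $X$-Darboux transformation; everything else reduces to the clean factorization $M(z) = z(r_0-r)$ and an elementary manipulation of logarithmic derivatives.
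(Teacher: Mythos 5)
Your proposal is correct and follows essentially the same route as the paper: pass from $r$ to the kernel element $z$ via Lemma~\ref{r_z_1_to_1}, apply $M=D_x+r_0+b$, determine the coefficients of $L_1$ (and $M_1$) from~(\ref{main}), and read off the new invariant as $-b_1 - M(z)_x/M(z)$. Your computed $b_1=b-(\ln r_0)_x$ is consistent with the paper's $m_{100}=r_0+b-r_{0x}/r_0$, and your explicit factorization $M(z)=(r_0-r)z$ just makes transparent the simplification the paper leaves implicit.
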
 
\begin{proof} Обозначим оператор $L$ через его коэффициенты: (\ref{op:L}). Пусть $r_0 \in \Ker_X(L)$, тогда 
\[
z_0=  f(y) e^{- \int b+r_0 \ dx} \in \Ker L \ ,
\]
и мы можем выразить $c$ через $r_0, a, f(y), b$.
Подставим $M=D_x+r_0+b$ и выражение для $c$ в~(\ref{main}), где 
$L_1$ и $M_1$ тоже введем через коэффициенты:
 $L_1  = D_{x} D_y + a_1(x,y) D_x + b_1(x,y) D_y + c_1(x,y)$, $M_1=D_x+ m_{100}(x,y)$.
Далее в доказательстве мы для краткости будем опускать обозначение зависимости коэффициентов от независимых переменных,
например, будем писать $a$ вместо $a(x,y)$.

Из полученного операторного равенства, сравнивая соответственные коэффициенты, последовательно выразим $a_1, b_1, c_1, m_{100}$.
В частности, мы получим 
\[
m_{100}=r_0+b-\frac{r_{0x}}{r_0} \ .
\]

Пусть $r \in \Ker_X(L)  / \{ r_0 \}$, тогда 
\[
 z= g(y) e^{- \int b+r \ dx} \in \Ker L \ ,
\]
 и $M(z) \in \Ker L_1$. Заметим, что для $r=r_0$ мы получим
$M(z)=0$. Тогда $X$-инвариант оператора $L_1$
можно построить по формуле:
\begin{equation} \label{expr:Xinv_L1}
\tilde{r}=-b_1 - \frac{M(z)_x}{M(z)} \ . 
\end{equation}
Упрощая это выражение, получим~(\ref{eq:r->r_X_X}).  

Утверждение для $Y$-инвариантов доказывается аналогично. 
\end{proof}

\begin{example} Рассмотрим оператор 
\[
L= D_x D_y + 1-x^2-x y \ ,
\]
и его $X$-инвариант
\begin{equation} \label{r0:expr}
 r_0=x+y \ . 
\end{equation}
Инварианты Лапласа оператора $L$ равны: $h=k=-1+x^2+yx$.

Пусть $L_1$ -- результат $X$-преобразования Дарбу соответствующего $r_0$.
Согласно формуле~(\ref{M:X-Darboux}), имеем
$M=D_x+r_0=D_x+x+y$. Результат такого Дарбу преобразования, оператор $L_1$ 
найдем из операторного равенства~(\ref{main}):
\[
L_1 = D_x D_y - \frac{1}{x+y} D_y -x^2-xy \ .
\]
Его инварианты Лапласа~(\ref{def:h,k}) имеют вид
\[
 h_1 = x^2 + xy \ , \quad k_1 = \frac{x^4+3x^3y+3x^2y^2+y^3x+1}{(x+y)^2} \ .
\]

Пусть $r$ -- какой-то другой $X$-инвариант оператора $L$. Согласно Теореме~\ref{thm:ker_z}, 
$r_1$, полученное по формуле
\begin{equation} \label{r1:example}
 r_1 = \frac{-(x+y)r^2+(x^2+2xy+y^2-1)r+r_x x+r_x y}{(x+y)(x+y-r)} \ .
\end{equation}
является $X$-инвариантом оператора $L_1$. 

Это можно и явно проверить. Для этого проверим выполнение равенства~(\ref{eq:r})
для оператора $L_1$ (то есть при подстановке $r_1$, $h_1$, $k_1$ вместо $r$, $h$, $k$ соответственно). 
Левая часть равенства~(\ref{eq:r}) 
есть некоторое алгебраическо-дифференциальное выражение $B$ зависящее от 
$r, x, y, r_x, r_y, r_{xx}, r_{xy}, r_{xxy}$.

Для его упрощения воспользуемся тем, что $r$ - $X$-инвариант оператора 
$L$, а значит имеет место равенство ~(\ref{eq:r}). Получим
\begin{equation} \label{1}
\frac{r^2 r_y - 2 r x-r y - r_x + r_x x^2 + r_x x y - r_{xy} r + r_y r_x}{r^2} = 0 \ .
\end{equation}

Далее используем стандартную для дифференциальной алгебры процедуру: 
из равенства~(\ref{1}) выразим $r_{xy}$ через $r, x, y, r_x, r_y$, а затем, дифференцируя, 
это равенство по $x$ и подставляя выражение для $r_{xy}$, получаем выражение для
$r_{xxy}$ через  $r, x, y, r_x, r_y, r_{xx}$. Теперь подставим найденные выражения для 
$r_{xy}$ и $r_{xxy}$ в $B$. После упрощений получаем $B=0$.
\end{example}

\begin{theorem}[$\Ker_X(L)$ под действием $Y$-преобразований Дарбу] 
Пусть $q_0 \in \Ker_Y(L)$, 
и $L_{1}$ -- оператор полученный из оператора $L$ соответствующим $Y$-преобразованием Дарбу.
Тогда отображение $\Ker_X(L) / \{ r_0 \} \rightarrow \Ker_X(L_1)$ задается формулой
\[
r \mapsto - \frac{q_{0x}+h-q_0 r}{q_0-q} \ ,
\]
где $q$ -- соответствующий $r$ $Y$-инвариант оператора $L$,
и $r_0$ -- соответствующий $q_0$ $X$-инвариант оператора $L$.
\end{theorem}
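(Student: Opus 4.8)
The plan is to mirror the proof of Theorem~\ref{thm:ker_z}, but now with a $Y$-preentation Darboux transformation acting and $X$-invariants being transported. First I would fix the operator $L$ by its coefficients~(\ref{op:L}), and use the Corollary (relation~(\ref{equality:rqhk})) together with Lemma~\ref{r_z_1_to_1} to attach to $q_0 \in \Ker_Y(L)$ the unique (up to a constant) function $z_0 = g(x) e^{-\int a+q_0\,dy} \in \Ker L$, whose associated $X$-invariant is exactly $r_0$. This lets me eliminate $c$ (and the relevant combinations of $a$, $b$) in favour of $q_0$, $a$, $g(x)$, $b$, just as in the previous theorem. Substituting the $Y$-Darboux operator $M = D_y + q_0 + a$ and this expression for $c$ into the intertwining relation~(\ref{main}) with $L_1 = D_xD_y + a_1 D_x + b_1 D_y + c_1$ and $M_1 = D_y + m_{010}$, and comparing coefficients, I would solve successively for $a_1, b_1, c_1, m_{010}$; in particular one obtains an explicit $m_{010}$ in terms of $q_0$ and $q_{0y}/q_0$ (the $Y$-analogue of the formula $m_{100}=r_0+b-r_{0x}/r_0$).

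Next, take an arbitrary $r \in \Ker_X(L)/\{r_0\}$ and its corresponding $Y$-invariant $q$ (which exists and is unique by the Corollary), and the associated $z = \tilde g(x) e^{-\int a+q\,dy} \in \Ker L$; note $r\neq r_0$ forces $q\neq q_0$, so the $Y$-Darboux image $M(z)$ does not vanish (it vanishes precisely when $q=q_0$, i.e.\ $z\propto z_0$). Then $M(z) \in \Ker L_1$, and the $X$-invariant of $L_1$ that this produces is
\[
\tilde r = -b_1 - \frac{M(z)_x}{M(z)} \ .
\]
The content of the theorem is that, after simplification using the defining relations among $r$, $q$, $h$, $k$ (namely $q_x - $ appropriate terms, the relation $r_y - q_x = h-k$, and $z\in\Ker L$ together with $q=-a-z_y/z$), this collapses to $-(q_{0x}+h-q_0 r)/(q_0-q)$.

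The main obstacle, as in Theorem~\ref{thm:ker_z}, is purely computational: carrying out the elimination cleanly so that the answer comes out in the stated compact form rather than a large rational-differential expression. The two facts that make the simplification go through are (i) $M(z) = z_y + (q_0+a)z = z\,(q_0 - q)$, which immediately gives $M(z)_x/M(z) = z_x/z + (q_0-q)_x/(q_0-q)$ and hence $-b_1 - M(z)_x/M(z) = \bigl(-b_1 - b + r\bigr) - (q_0-q)_x/(q_0-q)$ after using $r = -b - z_x/z$; and (ii) the expression for $b_1$ found from~(\ref{main}) in step one, which supplies $-b_1 - b$ in terms of $q_0$ and $q_{0y}$. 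Substituting (i) into $\tilde r$, replacing $(q_0-q)_x = q_{0x}-q_x$ by means of~(\ref{equality:rqhk}) applied to both the pair $(r_0,q_0)$ and the pair $(r,q)$ (so that $q_{0x}-q_x = r_{0y}-r_y$), and then using the relation between $q_{0x}$, $h$, $r_0$ coming from differentiating $q_0 = -a - z_{0y}/z_0$ and $L(z_0)=0$, should reduce everything to the claimed formula. I expect the only delicate point to be bookkeeping the "constants of integration" $g(x)$, $\tilde g(x)$, exactly as the remark after Lemma~\ref{r_z_1_to_1} warns; since only logarithmic derivatives of $z$ and $z_0$ enter $\tilde r$, these drop out. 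The $X\to Y$ statement being asymmetric in $h$ versus $k$ (here $h$ appears, reflecting that a $Y$-Darboux transformation is governed by $k$), I would double-check signs by testing on the worked Example, whose operator $L = D_xD_y + 1 - x^2 - xy$ has $h=k$ and explicit invariants.
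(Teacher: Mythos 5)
Your proposal follows essentially the same route as the paper's proof: substitute $M=D_y+q_0+a$ into~(\ref{main}), solve the coefficient comparison for $a_1,b_1,c_1$ and the coefficient of $M_1$, form $\tilde r=-b_1-M(z)_x/M(z)$ via~(\ref{expr:Xinv_L1}), and simplify using $L(z)=0$ together with $z_x=-(b+r)z$, $z_y=-(a+q)z$; your observation $M(z)=(q_0-q)z$ is precisely the shortcut that makes the paper's unsimplified expression~(\ref{expr:last:thm}) collapse. One slip to fix: since $r=-b-z_x/z$ gives $-z_x/z=b+r$, your intermediate expression should read $-b_1+b+r-(q_0-q)_x/(q_0-q)$, not $-b_1-b+r-\cdots$, and the coefficient comparison yields $b_1=b$, so that bracket is simply $r$ (your $-b_1-b=-2b$ is not expressible through $q_0$ and $q_{0y}$, which would signal the error when you ran the computation). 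The clean finish is the identity $q_x=qr-h$, obtained exactly as in the paper's last step from $L(z)=0$ and~(\ref{expr:zx_zy}); it turns $r-(q_{0x}-q_x)/(q_0-q)$ directly into the stated formula, making the detour through $q_{0x}-q_x=r_{0y}-r_y$ unnecessary.
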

\begin{proof} Обозначим оператор $L$ через его коэффициенты: (\ref{op:L}). 
Коэффициент $c$ можно выразить через инвариант $h$, (\ref{def:h,k}).
Подставим $M=D_y+q_0+a$ и выражение для $c$ в~(\ref{main}), где 
$L_1$ и $M_1$ тоже введем через коэффициенты: 
$L_1  = D_{x} D_y + a_1(x,y) D_x + b_1(x,y) D_y + c_1(x,y)$, $M_1=D_x+ m_{100}(x,y)$. 
Далее в доказательстве мы для краткости будем опускать обозначение зависимости коэффициентов от независимых переменных.

Из полученного операторного равенства, 
сравнивая соответственные коэффициенты, последовательно выразим $a_1, b_1, c_1, m_{100}$.
В частности, мы получим 
\[
m_{100}=q_0+a-\frac{q_{0y}}{q_0} \ .
\] 

Пусть $r \in \Ker_X(L)$,  $q$ -- соответствующий $r$ $Y$-инвариант оператора $L$, $z$ -- соответствующий элемент ядра $L$. Тогда $M(z) \in \Ker L_1$
и  $X$-инвариант оператора $L_1$ можно построить по формуле~(\ref{expr:Xinv_L1}).
Полученное выражение, 
\begin{equation} \label{expr:last:thm}
\tilde{r}=-\frac{(ab+a_x+b q_0 + q_{0x} ) z+z_x q_0 + b z_y + z_x a + z_{xy}}{(a+q_0)z+z_y} 
\end{equation}
не упрощается так же просто как выражение для $X$-инварианта в доказательстве Теоремы~\ref{thm:ker_z}. 

Воспользуемся тем, что $z \in \Ker L$, выразим $z_{xy}$ через производные $z$ меньшего порядка и подставим в~(\ref{expr:last:thm}).
Выразим производные $z$ первого порядка через $z$ используя связь между $r$ и $q$:
\begin{equation} \label{expr:zx_zy}
z_x = -(b + r) z \ , \quad z_y = -(a + q) z \ ,
\end{equation}
и подставим в~(\ref{expr:last:thm}), и получим утверждение теоремы.
\end{proof}

\section*{Заключение}

В данной статье мы полностью перенесли действие преобразований Дарбу в пространство инвариантов.
Введены новые понятия $X$- и $Y$-инвариантов, которые обладают достаточно интересными свойствами,
чтобы предполагать, что их дальнейшее изучение позволит прояснить многие проблемы символьных
решений дифференциальных уравнений.

Действие классических преобразований Лапласа в пространстве инвариантов 
остается пока невыясненным.

\end{document}